\newtheorem{thm}{Theorem}[section]
\newtheorem{lem}[thm]{Lemma}
\newtheorem{prop}[thm]{Proposition}
\numberwithin{equation}{section}
\begin{document}

\title[Commutants of Toeplitz operators]
{Commutants of Toeplitz operators with radial  symbols\\ on the Fock-Sobolev space}
\date{November 25, 2013}

\author[B. Choe]{Boo Rim Choe}
\address{Department of Mathematics, Korea University, Seoul 136-701, KOREA}
\email {cbr@korea.ac.kr}

\author[J. Yang]{Jongho Yang}
\address{Department of Mathematics, Korea University, Seoul 136-701, KOREA}
\email{cachya@korea.ac.kr}

\thanks{B. R. Choe was supported by Basic Science Research Program through the National Research Foundation of Korea(NRF) funded by the Ministry of Education, Science and Technology(2013R1A1A2004736)}
\subjclass[2010]{Primary 47B35; Secondary 30H20}
\keywords{Commutant; Toeplitz operator; Radial symbol; Fock-Sobolev space}

\begin{abstract}
In the setting of the Fock space over the complex plane,
Bauer and Lee have recently characterized commutants of Toeplitz operators with radial symbols,
under the assumption that symbols have at most polynomial growth at infinity. Their characterization states: If
one of the symbols of two commuting  Toeplitz operators is nonconstant and radial, then the other must be also radial.
We extend this result to the Fock-Sobolev spaces.
\end{abstract}

\maketitle

\section{Introduction}

Initiated by the seminal paper \cite{BH} of Brown and Halmos, the problem of characterizing commuting Toeplitz operators
has been one of the main topics in the study of Toeplitz operators on classical function spaces such
as the Hardy spaces and the Bergman spaces over various domains; see, for example, \cite{AC, CKL, CL, CR, GQV, L1, L2, V} and references therein.
While the problem with arbitrary symbols is still far from its solution, methods to handle the case of certain subclasses of symbols have been developed so far.
In particular, in the setting of the Bergman space over the unit disk, \u{C}u\u{c}ukovi\'c and  Rao \cite{CR} showed that if
one of the symbols of two commuting Toeplitz operators with bounded symbols is non-trivially radial, then the other also must be radial.
Recently Bauer and Lee \cite{BL} obtained an analogous result for the Fock space over the complex plane.
The purpose of the current paper is to extend the result of Bauer and Lee to the Fock-Sobolev spaces.

We first describe the function spaces to work on. For  $s\ge 0$ and an entire function $f$ with series expansion
$f(z)=\sum_{n=0}^{\infty} a_nz^n$, the $s$-order derivative $\mathcal R^s f$ is defined by
$$
\mathcal R^{s}f(z):=\frac{1}{(1+|z|)^{s}}\sum_{n=0}^{\infty} \frac{\Gamma(s+n+1)}{n!} a_nz^n.
$$
The {\em Fock-Sobolev space $F^{2,s}$ of order $s$} is then the space of all entire functions $f$ such that
the norm
$$
\|f\|_s := \|\mathcal R^s f \|_{L^2(GdA)}
$$
is finite. Here, $dA$ denotes the ordinary area measure on $\mathbb C$ and
$$
G(z): = \frac{1}{\pi}e^{-|z|^2}
$$
denotes the normalized Gaussian density.
The space $F^{2, s}$ turns out to be the closure in $L^2(G_s dA)$ of holomorphic polynomials where
$$
G_s(z) := |z|^{2s}G(z)
$$
denotes a weighted Gaussian density; see \cite[Theorem 4.2]{CCK}.
These Fock-Sobolev space was first studied in \cite{CZ} by means of ordinary derivatives in the case of positive integer orders and then generalized to arbitrary positive orders as above in \cite{CCK}. When $s=0$, note that the space $F^{2, 0}$ is simply the well-known Fock space over $\mathbb C$; we refer to a recent book \cite{Zhu} by Zhu for a systematic treatment of various aspects of the Fock space.

We now recall the notion of Toeplitz operators. Let
\begin{align}\label{projection}
P^s: L^2(G_sdA)\to F^{2, s}
\end{align}
be the Hilbert space orthogonal projection.
Also, let $\mathcal S_{\rm poly}$ be the class of all measurable functions on $\mathbb C$ having
at most polynomial growth at infinity. This means that a complex measurable function $u$ on $\mathbb C$ belongs to $\mathcal S_{\rm poly}$ if and only if
there is a constant $C>0$ and an exponent $m>0$ such that
\begin{align}
\label{symbol}
|u(z)|\le C(1+|z|)^m
\end{align}
for almost every $z\in \mathbb C$. Of course, the term ``almost every" here refers to the measure $dA$.
Now, for $u\in \mathcal S_{\rm poly}$, the {\em Toeplitz operator $T^s_u$ with symbol $u$} denotes the densely-defined linear operator on $F^{2,s}$ given by
\begin{align}
\label{toeplitz}
T^s_u f = P^s(uf)
\end{align}
for holomorphic polynomials $f$.
In Section \ref{prod}
we will see that products of finitely many Toeplitz operators with symbols in $\mathcal S$ are still densely-defined on $F^{2, s}$.

For $u,v \in \mathcal S_{\rm poly}$, we denote by
$$
[T_u^s, T_v^s ] : = T_u^s T_v^s - T_v^s T_u^s
$$
the commutator of $T_u^s$ and $T_v^s$.
We write $[T_u^s, T_v^s ]=0$  if $[T_u^s, T_v^s ]$ annihilates all holomorphic polynomials.
For example, it is not hard to check $[T_u^s, T_v^s ]=0$ for radial symbols $u$ and $v$. In fact
a Toeplitz operator with radial symbol is easily seen to be a diagonal operator with respect to the orthonormal basis formed by the normalized monomials.
The following theorem is our main result.

\begin{thm}\label{main}
Let $s\ge 0$ and $u,v \in \mathcal S_{\rm poly}$. Assume that $u$ is nonconstant and radial.
If $[T_u^s, T_v^s ]=0$, then $v$ is also radial.
\end{thm}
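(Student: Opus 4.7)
The plan is to expand $v(re^{i\theta})=\sum_{k\in\mathbb Z}v_k(r)e^{ik\theta}$ in the angular variable and prove $v_k=0$ a.e.\ for every $k\ne 0$. Since the monomials $\{z^n\}$ form an orthogonal basis of $F^{2,s}$ with $\|z^n\|_s^2=\Gamma(n+s+1)$, a radial Toeplitz operator is diagonal; writing $T_u^s z^n=\lambda_n z^n$, a direct polar-coordinate calculation gives
$$
\langle T_v^s z^n,z^m\rangle_s=2\int_0^\infty v_{m-n}(r)\,r^{n+m+2s+1}e^{-r^2}\,dr,
$$
and the commutation $[T_u^s,T_v^s]=0$ is equivalent to $(\lambda_m-\lambda_n)\langle T_v^s z^n,z^m\rangle_s=0$ for all $m,n\ge 0$.

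Fix $k\ne 0$ and set $m=n+k$. The substitution $t=r^2$ rewrites the matrix entry as a constant multiple of $V_k(n+\tfrac{k}{2}+s+1)$, where
$$
V_k(w):=\int_0^\infty v_k(\sqrt t)\,e^{-t}\,t^{w-1}\,dt.
$$
Similarly, $\lambda_n=U(n+s+1)/\Gamma(n+s+1)$ with $U(w):=\int_0^\infty u_0(\sqrt t)e^{-t}t^{w-1}\,dt$, so the eigenvalue gap $\lambda_{n+k}-\lambda_n$ is the restriction to the integers of the meromorphic function
$$
\Lambda_k(w):=\frac{U(w+k)}{\Gamma(w+k)}-\frac{U(w)}{\Gamma(w)}.
$$
Because $u,v\in\mathcal S_{\rm poly}$, both $U$ and $V_k$ are holomorphic and bounded on each vertical line in a suitable right half-plane, while Stirling gives $|1/\Gamma(\sigma+i\tau)|=O(e^{\pi|\tau|/2})$; hence $\Lambda_k(w)V_k(w+\tfrac{k}{2})$ is holomorphic in a right half-plane, of exponential type at most $\pi/2$ in the imaginary direction, and vanishes at every sufficiently large integer. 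Carlson's theorem then forces $\Lambda_k\cdot V_k(\cdot+\tfrac{k}{2})\equiv 0$, and connectedness of the domain of analyticity splits this into $\Lambda_k\equiv 0$ or $V_k\equiv 0$.

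To finish, one must rule out $\Lambda_k\equiv 0$, which would say that the sequence $(\lambda_n)$ is $|k|$-periodic. Combining the Berezin-type asymptotic for $U(n+s+1)/\Gamma(n+s+1)$ furnished by Laplace's method with the injectivity of the Mellin transform on the weighted class of $\mathcal S_{\rm poly}$-functions forces any $|k|$-periodic such sequence to be constant, whence $u_0$ is a.e.\ constant, contradicting the nonconstancy of $u$. Therefore $V_k\equiv 0$; Mellin injectivity then gives $v_k=0$ a.e., and $v$ agrees a.e.\ with the radial function $v_0$.

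The main obstacle is the analytic bookkeeping in the Carlson step: polynomial-growth symbols produce Mellin transforms only holomorphic in proper half-planes, so the precise Carlson hypotheses (exponential type strictly less than $\pi$ on the imaginary axis together with matching real-axis bounds in the half-plane) must be verified carefully for the product $\Lambda_k\,V_k(\cdot+\tfrac{k}{2})$. A secondary issue is excluding the nonconstant $|k|$-periodic eigenvalue scenario; here the asymptotic and uniqueness argument of Bauer--Lee for the Fock case $s=0$ should adapt to the weight $t^{s}e^{-t}$ associated with $F^{2,s}$ with only cosmetic changes.
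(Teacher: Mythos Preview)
Your overall architecture coincides with the paper's: decompose $v$ into angular modes, use that $T_u^s$ is diagonal on monomials, reduce $[T_u^s,T_v^s]=0$ to $(\lambda_{n+k}-\lambda_n)\langle T_v^s z^n,z^{n+k}\rangle_s=0$, recast both factors as Mellin-type transforms, pass from vanishing on an arithmetic progression of integers to identical vanishing, and then rule out the periodic-eigenvalue branch by the Bauer--Lee argument. So the strategy is right.

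The genuine gap is exactly where you flag the ``main obstacle,'' but it is not bookkeeping. Carlson's theorem requires the test function to be of exponential type in the half-plane. Your factor $V_k(w)=\int_0^\infty v_k(\sqrt t)\,e^{-t}t^{w-1}\,dt$ satisfies, for real $w=\sigma$ and $|v_k(\sqrt t)|\le C(1+t)^{m/2}$, the bound $|V_k(\sigma)|\lesssim \Gamma(\sigma+m/2)$, which is super-exponential; since $\Lambda_k$ is only polynomially bounded on the real axis, the product $\Lambda_k\,V_k(\cdot+k/2)$ also grows like $\Gamma(\sigma)$ there. Hence the Carlson hypotheses fail outright, and dividing by a single $\Gamma(w+c)$ to tame the real axis pushes the imaginary-axis type up to $\pi$, the borderline where Carlson is false (witness $\sin\pi z$). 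This is precisely why the paper does \emph{not} invoke Carlson: it multiplies by an extra $\Gamma$-factor to form $Q_{s,j}(z)=\Phi_j(z+s)\Psi_j(z+s)\Gamma(z+s+1)$, shows via a convolution identity (Lemma~\ref{inverse}, reducing to the $s=0$ case of Bauer--Lee) that $Q_{s,j}(z)=\mathcal M[f(x)e^{-x}](2z)$ for some $f\in\mathcal A$, and then uses the moment-uniqueness statement of Proposition~\ref{lemBL}(a) (if $\int_0^\infty f(t)e^{-t}t^{2k}\,dt=0$ for all large $k$ then $f=0$) in place of Carlson. That device is the actual content you are missing.

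Two smaller points. First, your reduction treats $k>0$ and $k<0$ symmetrically, but for $k<0$ the admissible range of $n$ shifts and the half-plane of holomorphy of $V_k$ moves; the paper handles this cleanly by proving $[T_u^s,T_v^s]=0\Rightarrow[T_{\bar u}^s,T_{\bar v}^s]=0$ (Lemma~\ref{adjoint}, via injectivity of the Berezin transform), which swaps $v_k$ with $\overline{v_{-k}}$ and lets one assume $k>0$. Second, for the periodic branch no Laplace asymptotics are needed: once $\Lambda_k\equiv 0$ you get that $H(z)=\mathcal M[uG](2z+2)/\Gamma(z+1)$ extends to a $|k|$-periodic entire function, and Proposition~\ref{lemBL}(b) applies verbatim (it is a statement about $uG$, independent of $s$) to force $u$ constant.
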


Our proof will be based on the main scheme of \cite{BL} with 
certain amount of extra work required
for the setting of the Fock-Sobolev spaces.

\section{Preliminaries}\label{pre}

In this section we collect some basic facts which we need in the proof Theorem \ref{main}.
The parameter $s\ge 0$ is fixed throughout the discussion in this section.

\subsection{Reproducing kernels}
It is known (\cite[Proposition 2.2]{CCK}) that there is a constant $C=C(s)>0$ such that
$$
|f(z)|\le C\frac{ e^{\frac{|z|^2}{2}} }{(1+|z|)^s} \|f\|_s,\qquad z\in \mathbb C
$$
for $f\in F^{2,s}$. This implies that each point evaluation is continuous on $F^{2, s}$. So, to each $z\in\mathbb C$, there corresponds
a unique reproducing kernel $K^s_z\in F^{2, s}$ such that
$$
f(z) = \langle f, K^s_z \rangle_s,\qquad f\in F^{2,s}
$$
where $\langle \ ,   \rangle_s$ denotes the inner product of $L^2(G_sdA)$.
So, the orthogonal projection \eqref{projection} can be realized as an integral operator
$$
P^{s}\psi(z)=\langle \psi, K^s_z \rangle_s,\qquad z\in \mathbb C
$$
for $\psi\in L^2(G_{s}dA)$.
The explicit formula for $K^{s}(z,w): =\overline{K^s_z(w)}$ is given by
\begin{align}
\label{kernel}
K^{s}(z,w)=\sum_{n=0}^\infty \frac{(z\overline w)^n }{\Gamma(s+n+1)};
\end{align}
see \cite[Theorm 4.5]{CCK}.

For the pointwise growth rate of $K^{s}(z,w)$, it is known that, given $0<a<1$, there are positive constants $C=C(a, s)$ and $\delta=\delta(a)$ such that
\begin{align}
\label{growth}
|K^{s}(z,w)| \le C \frac{ e^{{\rm Re} (z \overline w)} \chi_\delta(z\overline w) + e^{a |z||w|}}{(1+|z||w|)^s},\qquad z, w\in\mathbb C
\end{align}
where $\chi_\delta$ is the characteristic function of the angular sector consisting of all nonzero complex numbers
$\lambda$ such that $|\arg \lambda|<\delta$; see \cite[Corollary 4.6]{CCK}.
Also, for the norm estimate, we have
\begin{align}
\label{norm}
\|K^{s}_z\|_s = \sqrt{K^s(z, z)}\le C \frac{ e^{\frac{|z|^2}{2}}} {(1+|z|)^s},\qquad z\in\mathbb C
\end{align}
for some constant $C=C(s)>0$; see \cite[Proposition 4.8]{CCK}.

\subsection{Products of Toeplitz operators}\label{prod}
Since we are to consider commutators of densely-defined Toeplitz operators on $F^{2,s}$, we need to verify carefully that products of such operators are still densely defined  on $F^{2,s}$.

We first introduce a symbol class which contains $\mathcal S_{\rm poly}$.
Fix $s\ge 0$.
Given $\epsilon\ge 0$, we denote by $\mathcal D^s_\epsilon$ the Banach space of all complex measurable functions $u$ on $\mathbb C$ equipped with the norm
$$
\|u\|_{\mathcal D^s_\epsilon}:= \left\| u(z)(1+|z|)^s e^{-\epsilon|z|^2} \right\|_{L^\infty(dA)}.
$$
We put
\begin{align}\label{sym}
\mathcal S^s:= \bigcap_{\epsilon>0} \mathcal D^{s}_{\epsilon}\quad \text{and}\quad \mathcal D^s: = \bigcup_{0<\epsilon<1/2} \mathcal D^{s}_{\epsilon}.
\end{align}
For example, functions having a linear exponential growth at infinity are all contained in $\mathcal S^s$.
In particular, we have $\mathcal S_{\rm poly}\subset \mathcal S^s$. Also, note
$\mathcal D^s\subset  L^2(G_s dA)$.

In the next lemma $\Lambda^s$ denotes the integral operator  defined by
$$
\Lambda^s \psi(z) = \int_{\mathbb C} \psi(z)|K^s(z,w)| G_s(w)\,dA(w)
$$
for $\psi$ that makes the integral well defined. Note $|P^s\psi|\le \Lambda^s|\psi|$.

\begin{lem}
\label{shift}
Given $s\ge 0$ and $0\le \epsilon<1/2$, there is a constant $C=C(s,\epsilon)>0$ such that
$$
|\Lambda^s u(z)| \le C e^{\frac{|z|^2}{4(1-\epsilon)}} \|u\|_{\mathcal D^s_\epsilon},\qquad z\in\mathbb C
$$
for $u\in\mathcal D^s_\epsilon$.
\end{lem}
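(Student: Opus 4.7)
The plan is to reduce the bound to a standard Gaussian integral estimate by inserting the pointwise control on $u$ coming from its $\mathcal D^s_\epsilon$ norm, the explicit form of $G_s$, and the kernel estimate \eqref{growth}, and then to complete the square to extract the prescribed exponential prefactor.

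Concretely, from $u\in\mathcal D^s_\epsilon$ one has $|u(w)|\le\|u\|_{\mathcal D^s_\epsilon}(1+|w|)^{-s}e^{\epsilon|w|^2}$, while $G_s(w)=\pi^{-1}|w|^{2s}e^{-|w|^2}$. Fixing any $0<a<1$ and applying \eqref{growth} (the sector indicator $\chi_\delta$ is harmlessly discarded since we are after an upper bound), the desired inequality reduces to
\begin{align*}
\int_{\mathbb C}\frac{|w|^{2s}}{(1+|w|)^s(1+|z||w|)^s}\bigl[e^{\mathrm{Re}(z\overline w)}+e^{a|z||w|}\bigr]e^{-(1-\epsilon)|w|^2}\,dA(w)\le C\,e^{\frac{|z|^2}{4(1-\epsilon)}}.
\end{align*}
I would then complete the square in the exponent. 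For the first summand, setting $z_0=z/[2(1-\epsilon)]$, the identity $\mathrm{Re}(z\overline w)-(1-\epsilon)|w|^2=\frac{|z|^2}{4(1-\epsilon)}-(1-\epsilon)|w-z_0|^2$ produces exactly the required prefactor. For the second summand, $a|z||w|-(1-\epsilon)|w|^2\le\frac{a^2|z|^2}{4(1-\epsilon)}-(1-\epsilon)(|w|-a|z|/[2(1-\epsilon)])^2$ produces a strictly smaller one since $a^2<1$.

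It thus suffices to show that each residual integral $\int_{\mathbb C}\frac{|w|^{2s}}{(1+|w|)^s(1+|z||w|)^s}e^{-(1-\epsilon)|w-\zeta_0|^2}\,dA(w)$, where $\zeta_0$ depends on $z$ with $|\zeta_0|\asymp|z|$, is bounded uniformly in $z\in\mathbb C$. The main technical obstacle is precisely the factor $(1+|z||w|)^{-s}$ inherited from the kernel: a crude replacement by $1$ leaves a residual growth on the order of $(1+|z|)^s$, which cannot be absorbed. My remedy is a regional split based on the Gaussian peak $\zeta_0$. On the region $\{|w-\zeta_0|\le|\zeta_0|/2\}$ one has $|w|\asymp|z|$ for large $|z|$, so $|w|^{2s}/[(1+|w|)^s(1+|z||w|)^s]\lesssim 1/(1+|z|)^s$, and the integrand is actually small in $z$. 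On the complementary region, the decay $e^{-(1-\epsilon)|\zeta_0|^2/8}$ extracted from $|w-\zeta_0|>|\zeta_0|/2$ absorbs any polynomial factor in $z$, after which the Gaussian $e^{-(1-\epsilon)|w-\zeta_0|^2/2}$ integrates against $(1+|w|)^s$ to give a constant. The bounded regime $|z|\le 1$ falls within the same split since then $|\zeta_0|$ is bounded and the residual integrand is dominated by a fixed integrable function.
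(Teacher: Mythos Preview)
Your argument is correct, but the paper takes a shorter route that sidesteps what you call ``the main technical obstacle.'' The paper first disposes of $|z|\le 1$ trivially and then, for $|z|\ge 1$, uses the elementary inequality $1+|z||w|\ge 1+|w|$ to bound the weight factor
\[
\frac{|w|^{2s}}{(1+|w|)^s(1+|z||w|)^s}\le \frac{(1+|w|)^{2s}}{(1+|w|)^s(1+|w|)^s}=1
\]
outright. After this the two residual integrals are weight-free Gaussians, and completing the square yields $e^{|z|^2/[4(1-\epsilon)]}$ for the first and $(1+|z|)\,e^{|z|^2/[16(1-\epsilon)]}$ for the second (the paper takes $a=\tfrac12$), which is absorbed. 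Your regional split around the Gaussian peak also works, but it is doing extra labor to recover a cancellation that the single inequality above gives for free. One minor imprecision in your write-up: for the $e^{a|z||w|}$ summand the completed-square Gaussian is in the radial variable, $e^{-(1-\epsilon)(|w|-r_0)^2}$ with $r_0=a|z|/[2(1-\epsilon)]$, not $e^{-(1-\epsilon)|w-\zeta_0|^2}$; your near/far argument still goes through with the obvious adaptation.
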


\begin{proof}
Fix $s\ge 0$ and $0\le \epsilon<1/2$. Note from \eqref{growth} that
\begin{align}
\label{growth2}
|K^s(z,w)|\le C_1 \frac{e^{{\rm Re}(z\overline w)}+e^{\frac12|z||w|}}{(1+|z||w|)^s}
\end{align}
for some constant $C_1 =C_1(s)>0$.

Let $u\in\mathcal D^s_\epsilon$. It is easily seen from \eqref{growth2} that $|\Lambda^s u(z)|$, $|z|\le 1$,  stays bounded
by $\|u\|_{\mathcal D^{s}_\epsilon}$ times some constant (depending on $\epsilon$ and $s$).
So, assume $|z|\ge 1$. Since $1+|z||w|\ge 1+|w|$, we have again by \eqref{growth2}
\begin{align*}
|\Lambda^{s}u(z)|
&\leq \frac{C_1}{\pi} \|u\|_{\mathcal D^{s}_\epsilon}[I_1(z) + I_2(z)]
\end{align*}
where
\begin{align*}
I_1(z) = \int_{\mathbb C} e^{-(1-\epsilon)|w|^2+{\rm Re}(z\overline w)} \,dA(w)\\
\intertext{and}
I_2(z) = \int_{\mathbb C} e^{-(1-\epsilon)|w|^2+\frac12|z||w|} \,dA(w).
\end{align*}
For the first integral, we have
\begin{align*}
I_1(z)
&= e^{\frac{|z|^2}{4(1-\epsilon)}} \int_{\mathbb C} e^{-(1-\epsilon) \left| w- \frac{z}{2(1-\epsilon)}\right|^2} \, dA(w)\\
&= e^{\frac{|z|^2}{4(1-\epsilon)}} \int_{\mathbb C} e^{-(1-\epsilon) |w|^2} \, dA(w).
\end{align*}
For the second integral, we have
\begin{align*}
\frac{I_2(z)}{2\pi}
&=  \int_0^{\infty} e^{-(1-\epsilon)r^2+\frac12|z|r}r\,dr\\
&=  e^{\frac{|z|^2}{16(1-\epsilon)}}\int_0^{\infty} e^{-(1-\epsilon)\left[r-\frac{|z|}{4(1-\epsilon)}\right]^2}r\,dr\\
&\le e^{\frac{|z|^2}{16(1-\epsilon)}} \int_{-\infty}^{\infty} e^{-(1-\epsilon)t^2}\left |t+\frac{|z|}{4(1-\epsilon)}\right |\,dt\\
&\le 2 e^{\frac{|z|^2}{16(1-\epsilon)}}\int_0^\infty e^{-\frac{1}{2}t^2}(t+|z|)\,dt;
\end{align*}
the second inequality comes from $0\le \epsilon<1/2$. Note that the last integral is dominated by some absolute constant times $(1+|z|)$.
Combining these estimates, we conclude the lemma.
\end{proof}

Based on Lemma \ref{shift}, we introduce a sequence of positive numbers $\{\epsilon_j\}$ defined inductively by
$\epsilon_1=\frac{1}{4}$ and $\epsilon_{j+1}= \frac{1}{4(1-\epsilon_j)}$ for integers $j\ge 1$. 
More explicitly, we have
\begin{align}\label{seq}
\epsilon_j = \frac12 - \frac{1}{2j+2}
\end{align}
for each $j$.  Also, put
\begin{align*}
\mathcal F^s_j:= \mathcal D^s_{\epsilon_j} \cap F^{2, s}
\end{align*}
for each $j$ and
\begin{align*}
\mathcal F^s_\infty:& = \bigcup_{j=1}^\infty \mathcal F^{s}_j.
\end{align*}
Note that $\mathcal F^s_\infty$ contains all holomorphic polynomials and thus is densely contained in $F^{2,s}$.
Now, consider a scale of Banach spaces:
\begin{align}\label{chain}
\mathbb C \subset \mathcal F^{s}_1 \subset \mathcal F^{s}_2 \subset \cdots \subset \mathcal F^{s}_\infty  \subset F^{2,s};
\end{align}
each space $\mathcal F^s_j$ is considered as a closed subspace of $\mathcal D^s_{\epsilon_j}$.
Given an integer $k\ge 0$, denote by $\mathscr L_k(\mathcal F^s_\infty)$ the class of all linear operators $\Lambda$ on $\mathcal F^s_\infty$
such that
$$
\Lambda: \mathcal F^s_j\to \mathcal F^s_{j+k}
$$
is bounded for each $j\ge 1$. So, each operator in $\mathscr L_k(\mathcal F^s_\infty)$ might be considered as a ``$k$-order shift"
with respect to the  scale \eqref{chain}. Also, we put
$$
\mathscr L_{\rm fos}(\mathcal F^{s}_\infty):=\bigcup_{k=0}^\infty \mathscr  L_k(\mathcal F^s_\infty )
$$
where the term ``{\rm fos}" stands for ``finite-order shift". Clearly, $\mathscr L_{\rm fos}(\mathcal F^s_\infty)$ is an algebra.

Note
$P^s \in \mathscr L_2(\mathcal F^s_\infty)$ by Lemma \eqref{shift}.
Also, for $u\in \mathcal S^s$, denoting by $M_u$ the pointwise multiplication $\psi\mapsto u\psi$, one easily verifies
 $P^sM_u\in \mathscr L_3(\mathcal F^s_\infty)$.
So, extending symbols of Toeplitz operator in \eqref{toeplitz} to functions in $\mathcal S^s$, we have the following proposition.

\begin{prop}
\label{welldefined}
Given $s\ge 0$, the product of any finitely many Toeplitz operators with symbols in $\mathcal S^s$ is densely defined on $F^{2,s}$.
\end{prop}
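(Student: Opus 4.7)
The plan is to exploit the algebra structure of $\mathscr L_{\rm fos}(\mathcal F^s_\infty)$ set up above. The whole claim reduces to verifying that every individual Toeplitz operator $T^s_u = P^s M_u$ with $u\in\mathcal S^s$ is a finite-order shift on the chain $\{\mathcal F^s_j\}$; once this is in hand, the product of any finite family of such operators is again a finite-order shift, hence defined on $\mathcal F^s_\infty$, which contains the polynomials and is therefore dense in $F^{2,s}$.

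To check $T^s_u\in\mathscr L_{\rm fos}(\mathcal F^s_\infty)$ I would trace the pointwise bounds in two steps. Fix $\psi\in\mathcal F^s_j$. Multiplying the defining estimate $|\psi(z)|\lesssim (1+|z|)^{-s}e^{\epsilon_j|z|^2}$ by the estimate $|u(z)|\lesssim_\eta (1+|z|)^{-s}e^{\eta|z|^2}$, available for every $\eta>0$ because $u\in\mathcal S^s=\bigcap_\eta \mathcal D^s_\eta$, and choosing $\eta$ small enough that $\epsilon_j+\eta<\epsilon_{j+1}$, shows $u\psi\in\mathcal D^s_{\epsilon_{j+1}}$ with norm dominated by $\|\psi\|_{\mathcal D^s_{\epsilon_j}}$. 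In particular $u\psi\in\mathcal D^s\subset L^2(G_s\,dA)$, so $P^s(u\psi)$ makes sense and lies in $F^{2,s}$. Lemma \ref{shift} applied with $\epsilon=\epsilon_{j+1}$, combined with the recursion $\epsilon_{j+2}=1/(4(1-\epsilon_{j+1}))$, gives $|T^s_u\psi(z)|\le Ce^{\epsilon_{j+2}|z|^2}\|\psi\|_{\mathcal D^s_{\epsilon_j}}$. Since $\epsilon_{j+3}>\epsilon_{j+2}$ by \eqref{seq}, the extra Gaussian margin absorbs the $(1+|z|)^s$ weight appearing in the $\mathcal D^s_{\epsilon_{j+3}}$ norm, yielding $\|T^s_u\psi\|_{\mathcal D^s_{\epsilon_{j+3}}}\lesssim\|\psi\|_{\mathcal D^s_{\epsilon_j}}$. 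Thus $T^s_u\colon\mathcal F^s_j\to\mathcal F^s_{j+3}$ is bounded for every $j$, i.e.\ $T^s_u\in\mathscr L_3(\mathcal F^s_\infty)$.

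With this in place the proposition is one line: since $\mathscr L_{\rm fos}(\mathcal F^s_\infty)$ is closed under composition, the product $T^s_{u_1}\cdots T^s_{u_n}$ belongs to $\mathscr L_{3n}(\mathcal F^s_\infty)\subset\mathscr L_{\rm fos}(\mathcal F^s_\infty)$ and in particular is defined on all of $\mathcal F^s_\infty$. Density of $\mathcal F^s_\infty$ in $F^{2,s}$, which has already been noted, finishes the argument.

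The main obstacle is not conceptual but a careful bookkeeping of exponents. The Gaussian parameter $\epsilon_j$ creeps upward through three distinct sources at each application of $T^s_u$: multiplication by $u$ (a negligible increment $\eta$, which is the reason $\mathcal S^s$ was defined as the intersection of all $\mathcal D^s_\eta$), the projection $P^s$ itself (the nontrivial jump $\epsilon\mapsto 1/(4(1-\epsilon))$ supplied by Lemma \ref{shift}), and the absorption of the polynomial weight $(1+|z|)^s$ in the $\mathcal D^s_\epsilon$ norm (a further small increment). The reason the whole scheme closes up is precisely that the recursion defining $\{\epsilon_j\}$ matches the projection step while still keeping $\epsilon_j<1/2$ for every $j$, so no finite composition can push the exponent out of the admissible range.
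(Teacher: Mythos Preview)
Your argument is correct and follows exactly the approach the paper takes: the paper simply asserts that $P^sM_u\in\mathscr L_3(\mathcal F^s_\infty)$ for $u\in\mathcal S^s$ and then invokes the algebra property of $\mathscr L_{\rm fos}(\mathcal F^s_\infty)$ together with the density of $\mathcal F^s_\infty$. Your proposal is a faithful and careful unpacking of the phrase ``one easily verifies'' in the paragraph preceding the proposition, tracking the three increments of the Gaussian exponent exactly as intended.
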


\subsection{Berezin transform}
The Berezin transform has been one of main tools in the operator theory  on various Hilbert spaces of holomorphic functions.
For example, the vanishing Berezin transform usually indicates that the operator under consideration must be the zero operator.
Such one-to-one property of the Berezin transform remains valid on the Fock spaces; see \cite[Lemma 12]{B}. Here, we check that it is still
valid on the Fock-Sobolev spaces.

For an operator $A\in \mathscr L_{\rm fos}(\mathcal F^{s}_\infty)$, its {\em $s$-Berezin transform}
$\mathcal B^s [A]$ is as usual a continuous (in fact real-analytic) function on $\mathbb C$ defined by
$$
\mathcal B^{s}[A](z):=\frac{\langle AK_z^{s}, K_z^{s}\rangle_{s}}{\|K_z^s\|_s^2}
$$
for $z\in\mathbb C$. Note that integral in the right-hand side of the above is well defined, because
$K_z^s\in\mathcal F^s_\infty$ by \eqref{growth}.

\begin{lem}\label{1-1}
For any $s\ge 0$, the $s$-Berezin transform is one-to-one on $\mathscr L_{\rm fos}(\mathcal F^{s}_\infty)$.
\end{lem}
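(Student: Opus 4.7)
The plan is to assume $\mathcal B^s[A]\equiv 0$ for some $A\in\mathscr L_{\rm fos}(\mathcal F^s_\infty)$ and deduce $A\equiv 0$ on $\mathcal F^s_\infty$. Since $\|K^s_z\|_s\neq 0$, the hypothesis is equivalent to $\langle AK^s_z, K^s_z\rangle_s = 0$ for every $z$, which—by the reproducing property together with $AK^s_z\in\mathcal F^s_\infty\subset F^{2,s}$—simply says $(AK^s_z)(z)=0$. Following the classical Berezin-transform playbook, I would introduce the polarization
$$
F(z,w) := \langle AK^s_w, K^s_z\rangle_s = (AK^s_w)(z),
$$
so the hypothesis becomes $F(z,z)\equiv 0$ on $\mathbb C$.

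The central task is to show that $F$ is holomorphic in $z$ and anti-holomorphic in $w$; this is where the real work lies. Holomorphy in $z$ is immediate from $AK^s_w\in F^{2,s}$. For the anti-holomorphy in $w$, I would expand the kernel via \eqref{kernel} as
$$
K^s_w = \sum_{n=0}^\infty \frac{\overline{w}^n}{\Gamma(s+n+1)}\, e_n,\qquad e_n(\zeta):=\zeta^n,
$$
and verify that the partial sums converge to $K^s_w$ in the $\mathcal F^s_1$-norm. The verification splits $\mathbb C$ into a disk $\{|\zeta|\le M\}$, on which uniform convergence of the power series suffices, and its complement, on which both the full kernel (via \eqref{growth}) and its polynomial truncations (via the diagonal majorant $\sum |\overline{w}^n\zeta^n|/\Gamma(s+n+1)\le Ce^{|w||\zeta|}$) are absorbed by the Gaussian factor $e^{-|\zeta|^2/4}$ built into the $\mathcal F^s_1$-norm. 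Since $A$ is continuous from $\mathcal F^s_1$ into $\mathcal F^s_{1+k}$ for some $k$, we may then pass $A$ through the sum and obtain
$$
F(z,w) = \sum_{n=0}^\infty \frac{\overline{w}^n}{\Gamma(s+n+1)}\,(Ae_n)(z),
$$
exhibiting $F(z,w)=G(z,\overline{w})$ for an entire $G$ on $\mathbb C^2$. The hypothesis now reads $G(z,\overline{z})\equiv 0$, and the linear independence of the monomials $z^m\overline{z}^n$ as real-analytic functions on $\mathbb C$ forces every Taylor coefficient of $G$ to vanish. In particular $Ae_n\equiv 0$ for every $n\ge 0$, so $A$ annihilates every holomorphic polynomial.

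To upgrade this to $A\equiv 0$ on all of $\mathcal F^s_\infty$, I would repeat the tail analysis for an arbitrary $f\in\mathcal F^s_j$: its Taylor coefficients are controlled by Cauchy's inequality together with the defining bound $|f(\zeta)|\le \|f\|_{\mathcal F^s_j}\,e^{\epsilon_j|\zeta|^2}/(1+|\zeta|)^s$, and essentially the same splitting yields convergence of its Taylor partial sums to $f$ in some $\mathcal F^s_{j+m}$-norm. Continuity of $A$ across the scale \eqref{chain} then delivers $Af=0$. The main obstacle I anticipate is precisely this norm-convergence of the Taylor truncations in the weighted sup-topology of the $\mathcal F^s_j$ scale, since the tail estimate must be uniform over all of $\mathbb C$ rather than merely on compact sets; the strict inequality $a<1$ in \eqref{growth} is decisive here, as it leaves just enough margin to absorb the kernel into the Gaussian weight of a slightly larger index.
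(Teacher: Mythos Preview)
Your proposal is correct and follows the same overall architecture as the paper's proof: expand the kernel as a series, use continuity of $A$ on the scale to show that $F(z,w)=(AK^s_w)(z)$ is holomorphic in $z$ and anti-holomorphic in $w$, invoke the diagonal identity principle to get $F\equiv 0$, and then pass through a density argument to conclude $A=0$. The difference lies in the machinery used at the two convergence steps.

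The paper avoids your direct sup-norm tail estimates by interpolating an auxiliary family of \emph{Hilbert} spaces $\mathcal H^{s,r} := H(\mathbb C)\cap L^2(G_{s,r}\,dA)$ (with $G_{s,r}(z)=\tfrac1\pi |z|^{2s}e^{-r|z|^2}$, $0<r<1$) between consecutive $\mathcal F^s_j$'s. In each $\mathcal H^{s,r}$ the normalized monomials form an orthonormal basis, so the expansion \eqref{kernel} of $K^s_z$ converges there for free, and continuity of $A$ then transfers along the refined scale $\mathcal F^s_1\subset\mathcal H^s_1\subset\mathcal F^s_2\subset\cdots$ with no estimate required. For the final step the paper also finishes differently: having obtained $AK^s_z=0$ for every $z$, it uses that $\{K^s_z\}$ has dense linear span in each Hilbert space $\mathcal H^s_j$, rather than proving (as you do) that the Taylor polynomials of a general $f\in\mathcal F^s_j$ converge to $f$ in some $\mathcal F^s_{j+m}$. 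Your route is more elementary and self-contained but costs two rounds of weighted-sup tail bounds; the paper's Hilbert-space interpolation trades those estimates for a bit of extra setup and renders both convergence claims essentially tautological.
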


When $s=0$, the lemma is proved in \cite[Lemma 12]{B}. The proof below is similar and included for completeness.

\begin{proof}
Let $s\ge 0$. We first introduce an auxiliary class of function spaces. Given $0<r<1$, put $G_{s,r}(z):=\frac{1}{\pi}|z|^{2s}e^{-r|z|^2}$ and
$$
\mathcal H^{s,r}:= H(\mathbb C) \cap L^2(G_{s,r}dA)
$$
where $H(\mathbb C)$ denote the set of all entire functions on $\mathbb C$.
Of course, the space $\mathcal H^{s,r}$ is regarded as a Hilbert subspace of $L^2(G_{s,r}dA)$.
Since $0<r<1$, the embedding
$$
\mathcal H^{s,r}\hookrightarrow F^{2,s}
$$
is clearly continuous.
By \cite[Lemma 2.1]{CCK} there is a constant $C=C(r,s)>0$ such that
\begin{align}\label{growth3}
|f(z)| \leq C \|f\|_{L^2(G_{s,r}dA)}\frac{e^{\frac{r}{2}|z|^2}}{(1+|z|)^{s}}
\end{align}
for $f\in  \mathcal H^{s,r}$ and $z\in\mathbb C$.
We also note from \eqref{kernel} that with respect to the orthonormal basis of $\mathcal H^{s,r}$ formed by the normalized monomials
$$
e_{r,n}(w):=\sqrt{\frac{r^{n+s+1}}{\Gamma(n+s+1)}}~w^n,\qquad n=0, 1, 2, \dots,
$$
the kernel $K_z^s$ can be expanded as
\begin{align}\label{L2conv}
K^{s}_z=\sum_{n=0}^\infty \frac{\overline z^n}{\sqrt{\Gamma(n+s+1)r^{n+s+1}}}e_{r,n},
\end{align}
which is easily seen to be convergent in $\mathcal H^{s,r}$.

Now, using the sequence $\{\epsilon_j\}$ defined in \eqref{seq}, pick a sequence $\{r_j\}$ of positive numbers such that
$$
\epsilon_j<\frac{r_j}{2}<\epsilon_{j+1}, \qquad j= 1, 2, 3, \dots
$$
and put $\mathcal H^s_{j}:=\mathcal H^{s, r_j}$ for each $j$.
Obviously, the embeddings
$$
\mathcal F^s_j\hookrightarrow \mathcal H^s_{j}
$$
are all continuous. Also, the embeddings
$$
\mathcal H^s_j\hookrightarrow \mathcal F^s_{j+1}
$$
are all continuous by \eqref{growth3}.
Thus we have a scale of Banach spaces:
\begin{align}\label{scale}
\mathbb C  \subset \mathcal F^{s}_1  \subset \mathcal H^s_1  \subset  \mathcal F^{s}_2 \subset \mathcal H^s_2 \subset
\cdots \subset \mathcal F^{s}_\infty=\mathcal H^{s}_\infty
\end{align}
where $\mathcal H^{s}_\infty:= \cup_{j=1}^\infty \mathcal H^s_j$.

Let $A\in\mathscr L_{\rm fos}(\mathcal F^s_\infty)$. We see from \eqref{scale} that
there is a positive integer $k$ such that $A : \mathcal H^{s}_j \to \mathcal H^{s}_{j+k}$ is continuous for each $j$. In particular, we have by
\eqref{L2conv} (with $r=r_1$)
$$
AK^s_{z}=\sum_{n=0}^\infty\frac{\overline z^n }{\sqrt{\Gamma(n+s+1)r_1^{n+s+1}}}~A[e_{r_1,n}]\in \mathcal H^s_{1+k}\subset F^{2,s}.
$$
This shows that the function $z\mapsto \overline{AK^s_{z}}$ is an $F^{2,s}$-valued entire function.
Therefore the function
$$
f_A(z,w):=\langle K^s_{\overline w}, AK^s_z\rangle_s= \overline {AK^s_z(\overline w)}
$$
is an entire function of two variables $z$ and $w$; the second equality comes from the reproducing property.

Now, further assume that the $s$-Berezin transform of $A$ is identically zero.
Then, since $f_A(z,\overline z)=0$ for all $z\in \mathbb C$, we see that $f_A$ is identically zero on $\mathbb C \times \mathbb C$; see, for example, \cite[Exercise 3, p. 371]{K}.
In other words, $A$ annihilates all the reproducing kernels $K^s_z$.
Note that the span of $\{K^s_z: z\in \mathbb C\}$ is dense in $F^{2,s}$ and thus in $\mathcal H^s_j$ for each $j$.
Accordingly, we conclude that $A$ is the zero operator on $\mathcal H^{s}_\infty=\mathcal F^{s}_\infty$.
The proof is complete.
\end{proof}

\subsection{ $L^2$-decomposition of symbols}

In what follows $z=re^{i\theta}$ denotes the polar expression of $z\in\mathbb C$.

\begin{lem}\label{decompose}
Each $v \in \mathcal S_{\rm poly}$ admits an $L^2(G_sdA)$-convergent expansion of the form
$$
v(z)=\sum_{j=-\infty}^{\infty} v_j(r)e^{ij\theta}, \qquad z=re^{i\theta}
$$
where each $v_j \in \mathcal S_{\rm poly}$ is a radial function on $\mathbb C$.
\end{lem}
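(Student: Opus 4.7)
The proof is a routine angular Fourier decomposition; the only mild points are verifying that the Fourier coefficients lie in $\mathcal S_{\rm poly}$ and that the series converges in the weighted $L^2$.

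My plan is to define, for each integer $j$,
\[
v_j(r) := \frac{1}{2\pi} \int_0^{2\pi} v(re^{i\theta}) e^{-ij\theta}\, d\theta, \qquad r > 0,
\]
extended by $v_j(z) = v_j(|z|)$ to a radial function on $\mathbb C$ (the value at $0$ is immaterial, being a null set). Radiality is clear from the definition. Since $v \in \mathcal S_{\rm poly}$, we have $|v(z)| \le C(1+|z|)^m$ for a.e.\ $z$, and Fubini shows this pointwise bound passes to a.e.\ $r$ after averaging in $\theta$, giving
\[
|v_j(r)| \le \frac{1}{2\pi} \int_0^{2\pi} |v(re^{i\theta})|\, d\theta \le C(1+r)^m
\]
for a.e.\ $r>0$, uniformly in $j$. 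Hence each $v_j \in \mathcal S_{\rm poly}$.

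Next, the same polynomial bound together with the Gaussian decay in $G_s$ gives
\[
\int_{\mathbb C} |v(z)|^2 G_s(z)\, dA(z) \le \frac{C^2}{\pi} \int_{\mathbb C}(1+|z|)^{2m}|z|^{2s} e^{-|z|^2}\, dA(z) < \infty,
\]
so $v \in L^2(G_s dA)$. In polar coordinates this measure factors as $|z|^{2s} G(z)\, r\, dr\, d\theta$, so that
\[
L^2(G_s dA) \cong L^2\bigl((0,\infty),\, r^{2s+1}e^{-r^2}\, dr\bigr) \otimes L^2\bigl([0,2\pi),\, d\theta\bigr).
\]
The angular factor has the orthogonal basis $\{e^{ij\theta}\}_{j \in \mathbb Z}$, and the isomorphism above decomposes $L^2(G_s dA)$ as the orthogonal direct sum of the subspaces of functions of the form $f(r) e^{ij\theta}$.

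Finally, for $v \in L^2(G_s dA)$, Fubini guarantees that for a.e.\ $r>0$ the slice $\theta \mapsto v(re^{i\theta})$ lies in $L^2([0,2\pi),d\theta)$, and its angular Fourier expansion is precisely $\sum_j v_j(r) e^{ij\theta}$. Applying Parseval in $\theta$ and integrating against $r^{2s+1}e^{-r^2}\, dr$ yields
\[
\|v\|_{L^2(G_s dA)}^2 = 2\sum_{j=-\infty}^{\infty} \int_0^\infty |v_j(r)|^2 r^{2s+1} e^{-r^2}\, dr = \sum_{j=-\infty}^{\infty}\|v_j(r)e^{ij\theta}\|_{L^2(G_sdA)}^2,
\]
which gives both the finiteness of each summand and the $L^2(G_s dA)$-convergence of the partial sums of $\sum_j v_j(r) e^{ij\theta}$ to $v$. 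No step presents a real obstacle; the only thing to be careful about is keeping track of the null-set qualifier in the definition of $\mathcal S_{\rm poly}$ while invoking Fubini, which is standard.
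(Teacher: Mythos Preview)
Your proof is correct and follows essentially the same route as the paper's: define the angular Fourier coefficients $v_j(r)$, check they inherit the polynomial growth bound from $v$ (hence lie in $\mathcal S_{\rm poly}$), and use Parseval in the $\theta$-variable integrated against the radial weight to obtain $L^2(G_s\,dA)$-convergence. The only cosmetic difference is that you phrase the convergence via the tensor-product decomposition of $L^2(G_s\,dA)$, whereas the paper writes out the tail estimate explicitly; the content is the same.
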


\begin{proof}
Let $v\in \mathcal S_{\rm poly}$ and put $v_r(\lambda)= v(r\lambda)$ for $r>0$.
Since
\begin{align}\label{identity}
\|v\|^2_{L^2(G_sdA)}
= \dfrac{1}{\pi \Gamma(s+1)}\int_0^{\infty} \left\{\int_0^{2\pi}|v_r(e^{i\theta})|^2~d\theta\right\}~ r^{2s+1}e^{-r^2}~dr < \infty,
\end{align}
we see that $v_r \in L^2(\mathbb T)$, the usual Lebesgue space over the unit circle $\mathbb T$ with respect to the arc-length measure, for a.e. $r>0$. For such $r$, we have an $L^2(\mathbb T)$-convergent Fourier-series:
$$
v(r e^{i\theta})=\sum_{j=-\infty}^\infty v_j(r)e^{ij\theta}
$$
where
$$
v_j(r):=\frac{1}{2\pi}\int_0^{2\pi} v(re^{i\theta})e^{-ij\theta}~d\theta
$$
denotes the $j$-th (measurable) Fourier coefficient of $v_r$.

Each $v_j$, when extended to a radial function on $\mathbb C$, is easily seen to belong to
$\mathcal S_{\rm poly}$, because $v\in\mathcal S_{\rm poly}$.
Moreover, we have by \eqref{identity} and Parseval's identity
$$
\sum_{j=-\infty}^\infty\int_0^{\infty}|v_j(r)|^2~r^{2s+1}e^{-r^2}\, dr=\pi\Gamma(s+1)\|v\|^2_{L^2(G_sdA)}<\infty.
$$
Accordingly, given $\epsilon$, one can find a finite set $J$ of integers such that
\begin{align*}
\epsilon
&>\frac{2}{\Gamma(s+1)}\sum_{j\notin J}\int_0^{\infty}|v_j(r)|^2~r^{2s+1}e^{-r^2}\, dr\\
&= \int_{\mathbb C} \Biggl| \sum_{j\notin J}v_j(r)e^{ij\theta} \Biggr|^2 G_s(z)\, dA(z)\qquad (z=re^{i\theta}).
\end{align*}
This shows the $L^2(G_sdA)$-convergence of the series in question. The proof is complete.
\end{proof}

\section{Proofs}\label{proof}

As in \cite{BL}, we first observe that commuting property of two Toeplitz operators
implies a collection of functional equations involving the Mellin transforms associated with symbol functions and the density function.

So, before proceeding, we first recall the well-known notion of the Mellin transform.
Let $\mathbb R^+$ be the set of all positive numbers.
Given a locally integrable function $f$ on $\mathbb R^+$, its {\em Mellin transform} $\mathcal M[f]$
is defined by
$$
\mathcal M[f](z):=\int_0^{\infty} f(x)x^{z-1}\,dx
$$
at $z\in\mathbb C$ for which the integral exists. So, $\mathcal M[f]$ is defined at $z$ if $|f(x)|x^{{\rm Re}(z)-1}\in L^1(\mathbb R^+)$
and, for example,  this constraint is met in the vertical strip $a<{\rm Re}(z)<b$
when $f(x)x^a= \mathcal O(1)$ as $x\to 0^+$ and  $f(x)x^{b}= \mathcal O(1)$ as $x\to \infty$.

We also recall some results about the Mellin transform proved in \cite{BL}.
Denote by $\mathcal A$ the class of all complex measurable functions $u$ on
$\mathbb R^+$ such that
\begin{align}\label{classA}
 \left|u\left(\frac1x\right)x^{-\rho}\right|=\mathcal O(1)\quad \text{and} \quad  |u(x)x^{-\eta}|=\mathcal O(1)\quad  \text{for}\quad  x \ge 1
\end{align}
for some $\rho, \eta\ge 0$. The next proposition is taken from \cite[Propositions 4.11 and 4.16]{BL}.
In conjunction with the second statement of the next proposition, note
\begin{align}\label{mellinG}
\mathcal M[G](2z)=\dfrac{1}{2\pi}\Gamma(z)
\end{align}
for ${\rm Re}(z)>0$.

\begin{prop}[\cite{BL}]\label{lemBL}
Given $u\in\mathcal A$, the following statements hold:
\begin{enumerate}
\item[(a)] If $0<a\le 2$ and if
$$
\int_0^{\infty} u(t)e^{-t}t^{ak}\, dt=0
$$
for all large positive integers $k$, then $u=0$ a.e. on $\mathbb R^+$;
\item[(b)] If the function $$
z\mapsto \frac{\mathcal M[uG](2z+2)}{\Gamma(z+1)}
$$ is
a periodic entire function whose period is a positive integer, then $u$ is constant a.e. on $\mathbb R^+$.
\end{enumerate}
\end{prop}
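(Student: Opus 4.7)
My plan for part~(a) is to translate the vanishing-moment condition into a Mellin-transform equation and apply a Carlson-type uniqueness theorem. The substitution $t = s^{1/a}$ recasts the hypothesis as $\int_0^\infty \psi(s)\, s^k\, ds = 0$ for $k \ge K_0$, where $\psi(s) := u(s^{1/a}) e^{-s^{1/a}} s^{1/a - 1}$. The class-$\mathcal{A}$ bounds on $u$, together with the super-exponential decay of $e^{-s^{1/a}}$, show that the Mellin transform $\Psi(z) := \int_0^\infty \psi(s)\, s^z\, ds$ is holomorphic on a right half-plane and vanishes at every large positive integer. The trivial estimate $|\Psi(x+iy)| \le \int_0^\infty |\psi(s)|\, s^x\, ds = O(\Gamma(ax + c_0))$ suggests normalizing by $\Gamma(az + c_0)$; by Stirling, the resulting quotient $H(z)$ is bounded on the real axis and has exponential type at most $\pi a/2 \le \pi$ along vertical lines. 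Carlson's theorem on the right half-plane then forces $H$, and hence $\Psi$, to vanish identically, the borderline $a = 2$ being handled by a Phragm\'en--Lindel\"of refinement that exploits the polynomial decay hidden in Stirling's asymptotics. Mellin inversion yields $u \equiv 0$ a.e.

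For part~(b), the plan is to prove that $F(z) := \mathcal{M}[uG](2z+2)/\Gamma(z+1)$ has only polynomial growth on vertical strips, which together with $N$-periodicity forces it to be constant. I would use a saddle-point analysis of $\mathcal{M}[uG](2z+2) = (1/\pi)\int_0^\infty u(t)\, e^{-t^2} t^{2z+1}\, dt$: the saddle of the phase $-t^2 + (2z+1)\log t$ sits near $t \approx \sqrt{z+1}$ for large $|z|$, so Laplace's method gives $\mathcal{M}[uG](2z+2) \sim u(\sqrt{z+1})\Gamma(z+1)/(2\pi)$ on vertical strips, and hence $F(z) \sim u(\sqrt{z+1})/(2\pi)$, which is polynomially bounded in $|\mathrm{Im}(z)|$ since $u \in \mathcal{A}$. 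Expanding $F(z) = \sum_n c_n e^{2\pi i n z/N}$ by periodicity, the $n \ne 0$ modes grow exponentially in $|y|$ in one direction or the other, contradicting polynomial growth unless $c_n = 0$. So $F \equiv c$, and via \eqref{mellinG} this gives $\mathcal{M}[(u - 2\pi c)G](2z+2) \equiv 0$; the substitution $t = \sqrt{s}$ converts this into $\int_0^\infty (u(\sqrt{s}) - 2\pi c)\, e^{-s} s^k\, ds = 0$ for every integer $k \ge 0$, whereupon part~(a) with $a = 1$, applied to $v(s) := u(\sqrt{s}) - 2\pi c$ (which inherits membership in $\mathcal{A}$), yields $u \equiv 2\pi c$ a.e.

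The main obstacle in both parts is securing sharp vertical-strip growth estimates. In (a), the boundary case $a = 2$ sits exactly at Carlson's critical exponential type $\pi$, so the standard version of the theorem fails and a delicate Phragm\'en--Lindel\"of argument using the subsidiary polynomial decay is required. In (b), the crude bound $|\mathcal{M}[uG](2z+2)| = O(\Gamma(\mathrm{Re}(z) + c_0))$, which ignores the oscillation in $t^{2iy} = e^{2iy \log t}$, combined with Stirling for $\Gamma(z+1)$ yields only $|F(x+iy)| = O(e^{\pi|y|/2})$; this eliminates Fourier modes with $|n| > N/4$ but leaves $0 < |n| \le N/4$ untouched. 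Recovering the true polynomial $|y|$-growth of $F$ through careful saddle-point analysis of the oscillatory Mellin integral is the genuinely delicate technical step.
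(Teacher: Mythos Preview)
The paper does not prove this proposition at all: it is quoted verbatim from Bauer and Lee \cite[Propositions 4.11 and 4.16]{BL}, and the paper simply cites it.  There is therefore no ``paper's own proof'' to compare your sketch against.

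That said, a few remarks on your outline.  For part~(a) the Carlson-theorem route is natural and is in spirit what \cite{BL} does; your identification of the $a=2$ borderline as the delicate case is correct, and it genuinely requires a sharpened Carlson/Phragm\'en--Lindel\"of statement rather than the textbook version.  For part~(b), however, your saddle-point heuristic $F(z)\sim u(\sqrt{z+1})/(2\pi)$ is not well posed: $u$ is merely a measurable function on $\mathbb R^+$ satisfying the polynomial bounds in \eqref{classA}, so the symbol $u(\sqrt{z+1})$ has no meaning for complex $z$, and no Laplace/steepest-descent expansion of $\mathcal M[uG](2z+2)$ in terms of point values of $u$ can be expected.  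The polynomial vertical-strip growth you need must instead come from integration-by-parts/stationary-phase estimates on the oscillatory factor $t^{2iy}=e^{2iy\log t}$ together with the smoothness of the Gaussian weight, not from values of $u$ itself.  As you yourself note, the crude bound gives only $O(e^{\pi|y|/2})$, which leaves Fourier modes $0<|n|\le N/4$ alive; closing this gap is the real content of the result in \cite{BL}, and your proposal does not yet supply an argument for it.
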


Now, we derive a collection of functional equations
by justifying that our density function satisfies certain conditions given in \cite{BL}. For nonnegative integers $n$, set
$$
a_n:= \int_{\mathbb C} |z|^{2n}G_{s}(z)\, dA(z)=\Gamma(s+n+1).
$$
We then have by \eqref{kernel} and \eqref{norm}
$$
\sum_{n=0}^\infty \frac{|z|^n}{a_n} \le C \frac{e^{\frac{|z|}{2}}}{(1+\sqrt{|z|})^s},\qquad z\in \mathbb C
$$
for some constant $C=C(s)>0$. Thus we have
$$
u(z) |z|^m \sum_{n=0}^\infty \frac{|z|^n}{a_n}\in L^1 (G_s dA)
$$
for any $u\in \mathcal S_{\rm poly}$ and integer $m\ge 1$. Note $a_n^{1/n}\to \infty$ as $n\to \infty$ by Sterling's formula.
This shows that the density function $G_s$ satisfies the conditions required in \cite[Proposition 2.4]{BL}. Also, recall that
holomorphic polynomials form a dense subset of $F^{2,s}$. So, the following is a consequence of \cite[Proposition 2.4]{BL}.
In what follows we identify a radial function on $\mathbb C$ with its  its restriction to $\mathbb R^+$.

\begin{lem}
Let $s\ge 0$.
Let $u,v \in \mathcal S_{\rm poly}$ and assume that $u$ is nonconstant and radial.
If $[T^s_u, T^s_v]=0$, then
\begin{align}\label{ftleq}
\left\{\dfrac{\mathcal M[uG_s](2k+2)}{\mathcal M[G_s](2k+2)}-\dfrac{\mathcal
M[uG_s](2k+2j+2)}{\mathcal M[G_s](2k+2j+2)}\right\} \mathcal M[v_jG_s](j+2k+2)=0
\end{align}
for all integers $k\geq0$ and $j$ with $j+k\geq 0$. Here, $v_j$ denotes the function provided by the decomposition of $v$ in Lemma \ref{decompose}.
\end{lem}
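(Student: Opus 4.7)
The plan is to derive \eqref{ftleq} by testing the commutator relation $[T_u^s,T_v^s]=0$ on the dense set of monomials $\{z^k\}_{k\ge 0}\subset F^{2,s}$. Since the preceding paragraph has verified that the density $G_s$ satisfies the hypotheses of \cite[Proposition 2.4]{BL}, one may simply invoke that proposition; below I sketch the underlying calculation that the cited result encapsulates.

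First I would expand
$$
T_w^s z^k(z) = \int_{\mathbb C} w(\zeta)\zeta^k K^s(z,\zeta) G_s(\zeta)\,dA(\zeta),
$$
for $w\in\{u,v\}$ by inserting the series \eqref{kernel} for $K^s$, switching to polar coordinates $\zeta=\rho e^{i\phi}$, and using the orthogonality $\int_0^{2\pi}e^{im\phi}\,d\phi=2\pi\delta_{m,0}$. For the radial $u$ only the $n=k$ term of the kernel survives the angular integration, producing the diagonal action
$$
T_u^s z^k = c_u(k)\,z^k, \qquad c_u(k):=\frac{\mathcal M[uG_s](2k+2)}{\mathcal M[G_s](2k+2)}.
$$
For $v$, after inserting the Fourier expansion $v(\rho e^{i\phi})=\sum_j v_j(\rho)e^{ij\phi}$ supplied by Lemma \ref{decompose}, the angular integration selects $n=k+j$ from the kernel and yields
$$
T_v^s z^k = \sum_{j+k\ge 0} d_v(k,j)\,z^{k+j}, \qquad d_v(k,j):=\frac{\mathcal M[v_jG_s](j+2k+2)}{\mathcal M[G_s](2k+2j+2)}.
$$
Substituting these two formulas into $[T_u^s,T_v^s]z^k=0$ gives
$$
\sum_{j+k\ge 0} d_v(k,j)\bigl[c_u(k+j)-c_u(k)\bigr]\,z^{k+j}=0,
$$
and by linear independence of distinct monomials in $F^{2,s}$ each coefficient must vanish; rewriting the resulting scalar identity produces exactly \eqref{ftleq}.

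The main obstacle, and the reason the paper first verifies the hypotheses of \cite[Proposition 2.4]{BL}, lies in justifying two interchanges: the infinite Fourier sum over modes of $v$ with the reproducing integral defining $T_v^s$, and the $n$-summation of the kernel series with the $\zeta$-integration. These manipulations are legitimized by the $L^1(G_s\,dA)$-bound for $u(z)|z|^m\sum_n|z|^n/a_n$ already derived, together with the $L^2(G_s dA)$-convergence of the $v_j$-expansion from Lemma \ref{decompose}. Once these interchanges are granted the computation above goes through termwise, and the lemma reduces to a direct invocation of \cite[Proposition 2.4]{BL}.
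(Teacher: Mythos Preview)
Your proposal is correct and follows the same approach as the paper: the paper does not give an independent proof either, but simply records that the preceding verification of the hypotheses on $G_s$ (the $L^1$-bound on $u(z)|z|^m\sum_n|z|^n/a_n$, the growth $a_n^{1/n}\to\infty$, and density of polynomials) allows a direct appeal to \cite[Proposition 2.4]{BL}. Your additional sketch of the diagonal action $T_u^s z^k=c_u(k)z^k$ and the shift action $T_v^s z^k=\sum_j d_v(k,j)z^{k+j}$ is exactly the computation that underlies that cited proposition, so nothing is missing.
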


In conjunction with \eqref{ftleq} we note
\begin{align*}
\mathcal M[uG_s](z)
&=\frac{1}{\pi}\int_0^{\infty} u(t)e^{-t^2}t^{z+2s-1}\,dt
\end{align*}
is holomorphic on the half-plane ${\rm Re}(z)>-2s$. Also, note that $\mathcal M[uG_s]$ and $\mathcal M[uG]$ are related by
\begin{align*}
\mathcal M[uG_s](z)=\mathcal M[uG](2s+z).
\end{align*}
In particular, we have by \eqref{mellinG}
\begin{align*}
\mathcal M[G_s](2z)=\dfrac{1}{2\pi}\Gamma\left(s+z\right)
\end{align*}
for ${\rm Re}(z)>-s$. Accordingly,  \eqref{ftleq} can be rephrased as
\begin{align}\label{mainequ}
\Phi_j(k+s)\Psi_j(k+s)=0
\end{align}
where
\begin{align*}
\Phi_j(z)&=\dfrac{\mathcal M[uG](2z+2)}{\Gamma(z+1)}-\dfrac{\mathcal M[uG](2z+2j+2)}{\Gamma(z+j+1)}
\intertext{and}
\Psi_j(z)&= \mathcal M[v_jG](j+2z+2).
\end{align*}
Since the Gamma function is non-vanishing, we see that
each $\Phi_j$ is holomorphic on the half-plane ${\rm Re}(z)>\max\{-1, -j-1\}$.

In what follows we use the notation
$$
Q_{s,j}(z):= \Phi_j(z+s)\Psi_j(z+s)\Gamma(z+s+1),\qquad {\rm Re}(z)>-s-1
$$
for $s\ge 0$ and positive integers $j$.
Recall that $\mathcal A$ denotes the class of functions on $\mathbb R^+$ satisfying\eqref{classA}.

\begin{lem}
\label{inverse}
Given $s\ge 0$ and a positive integer $j$, there is some $f\in \mathcal A$ such that
\begin{align}
\label{qsj}
Q_{s,j}(z)=\mathcal M[f(x)e^{-x}](2z)
\end{align}
for ${\rm Re}(z)>-s-1$.
\end{lem}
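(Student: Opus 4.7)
The plan is to unfold each of the three Mellin transforms appearing in $Q_{s,j}(z)$ into an explicit one-dimensional integral, dispose of the Gamma quotient $\Gamma(z+s+1)/\Gamma(z+s+j+1)$ via the Beta integral, and collapse the whole expression into a single Mellin integral in a new radial variable $\rho$; then $f$ falls out upon extracting a factor of $\pi^{-2}x^{2s+2}e^{x}$. I would start by using $\mathcal M[uG_s](w)=\mathcal M[uG](2s+w)$ to rewrite
\[
Q_{s,j}(z)=\left[\mathcal M[uG](2z+2s+2)-\tfrac{\Gamma(z+s+1)}{\Gamma(z+s+j+1)}\mathcal M[uG](2z+2s+2j+2)\right]\mathcal M[v_jG](j+2z+2s+2).
\]

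The central algebraic step is to apply $\tfrac{\Gamma(w)}{\Gamma(w+j)}=\tfrac{1}{(j-1)!}\int_0^1 x^{w-1}(1-x)^{j-1}\,dx$ with $w=z+s+1$ to the middle Mellin transform, swap the orders of integration by Fubini (justified by the polynomial growth of $u$ together with the Gaussian factor $e^{-t^2}$), and then substitute $x=y^2/t^2$ so that $y$ ranges over $(0,t)$. A short exponent count reduces the middle term to $\tfrac{1}{\pi}\int_0^\infty \widetilde K(y)\,y^{2z+2s+1}\,dy$ with
\[
\widetilde K(y):=\tfrac{2}{(j-1)!}\int_y^\infty u(t)e^{-t^2}t(t^2-y^2)^{j-1}\,dt.
\]
Setting $K(y):=u(y)e^{-y^2}-\widetilde K(y)$, the bracket in $Q_{s,j}$ collapses to the single Mellin integral $\tfrac{1}{\pi}\int_0^\infty K(y)y^{2z+2s+1}\,dy$.

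Multiplying by $\mathcal M[v_jG](j+2z+2s+2)=\tfrac{1}{\pi}\int_0^\infty v_j(\tau)e^{-\tau^2}\tau^{j+2z+2s+1}\,d\tau$ and performing the change of variables $\rho=y\tau$ (with $\tau$ retained) would yield
\[
Q_{s,j}(z)=\tfrac{1}{\pi^2}\int_0^\infty L(\rho)\rho^{2z+2s+1}\,d\rho,\qquad L(\rho):=\int_0^\infty K(\rho/\tau)v_j(\tau)e^{-\tau^2}\tau^{j-1}\,d\tau.
\]
I would then define $f(x):=\pi^{-2}x^{2s+2}e^{x}L(x)$, which converts the right side to $\int_0^\infty f(x)e^{-x}x^{2z-1}\,dx=\mathcal M[f(x)e^{-x}](2z)$. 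The identity, initially valid where all integrals converge absolutely, extends to $\mathrm{Re}(z)>-s-1$ by analytic continuation of both sides.

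The last step is to verify $f\in\mathcal A$. The elementary bound $e^{-t^2}\le e^{-y^2/2}e^{-t^2/2}$ for $t\ge y$ yields the uniform Gaussian-tail estimate $|K(y)|\le Ce^{-y^2/2}$. A Laplace-type analysis, exploiting that $\rho^2/(2\tau^2)+\tau^2$ has minimum $\sqrt{2}\,\rho$ over $\tau>0$, then gives $|L(\rho)|\le C(1+\rho)^{N}e^{-\sqrt{2}\,\rho}$ as $\rho\to\infty$, while the uniform bound on $K$ together with the Gaussian weight $e^{-\tau^2}$ gives $|L(\rho)|=O(1)$ as $\rho\to 0^+$. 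Consequently $f(x)=O(x^{2s+2})$ at the origin and $|f(x)|\le C(1+x)^{N'}e^{-(\sqrt{2}-1)x}$ at infinity, placing $f$ comfortably in $\mathcal A$. The main hurdle is the Beta-integral reduction: carefully bookkeeping the exponents under $x=y^2/t^2$ and justifying the Fubini swap; once that step is in hand, extracting $f$ and verifying its membership in $\mathcal A$ is routine.
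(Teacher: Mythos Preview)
Your computation is correct, but it is far more laborious than what the paper actually does. The key observation you missed is that $\Phi_j$, $\Psi_j$ do not depend on $s$, so directly from the definition one has $Q_{s,j}(z)=Q_{0,j}(z+s)$. The case $s=0$ is already established in \cite[Proposition 4.10]{BL}, giving some $g\in\mathcal A$ with $Q_{0,j}(w)=\mathcal M[g(x)e^{-x}](2w)$; substituting $w=z+s$ and absorbing $x^{2s}$ into the symbol yields $Q_{s,j}(z)=\mathcal M[g(x)x^{2s}e^{-x}](2z)$, and $f(x):=g(x)x^{2s}$ clearly remains in $\mathcal A$. That is the entire proof in the paper---three lines.

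What you have done is essentially reprove the $s=0$ result of Bauer--Lee from scratch (Beta-integral reduction of the Gamma quotient, Mellin convolution in $\rho=y\tau$, Laplace-type tail estimate for $L$), carrying the shift $2s$ through every exponent. This buys self-containment---your argument does not rely on \cite{BL}---at the cost of redoing work that is already in the literature. The paper's route buys brevity by recognising that the dependence on $s$ is a pure translation in the Mellin variable, hence trivially handled by multiplying the $s=0$ inverse by $x^{2s}$. Both approaches land on functions that differ only by harmless bounded factors, and your verification that $f\in\mathcal A$ (polynomial vanishing at $0$, sub-exponential $e^{-(\sqrt 2-1)x}$ decay at $\infty$) is sound.
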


\begin{proof}
In case $s=0$, the lemma is proved in \cite[Proposition 4.10]{BL}. So, there is some $g\in\mathcal A$ such that
$$
Q_{j,0}(z)=\mathcal M[g(x)e^{-x}](2z)
$$
for ${\rm Re}(z)>-1$. Now, one may  check that the lemma holds with the function $f(x):= g(x)x^{2s}$.
\end{proof}

Note $({T^s_u})^\ast =T^s_{\bar u}$ for $u\in L^\infty(dA)$. Thus, for $u, v\in L^\infty(dA)$, we always have
$[T_u^s, T^s_v]=0$ if and only if $[T^s_{\bar u}, T^s_{\bar v}]=0$.
We need to extend this property to symbols under consideration. Recall that $\mathcal S^s$ denotes the symbol class defined in \eqref{sym}.

\begin{lem}\label{adjoint}
Let $u,v \in \mathcal S^s$. If $[T^s_u, T^s_v]=0$, then $[T^s_{\bar u}, T^s_{\bar v}]=0$.
\end{lem}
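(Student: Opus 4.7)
The plan is to reduce the claim to the one-to-one property of the $s$-Berezin transform (Lemma \ref{1-1}) by establishing the reciprocity identity
$$
\langle T^s_u T^s_v K^s_z,\, K^s_z\rangle_s = \overline{\langle T^s_{\bar v}T^s_{\bar u}K^s_z,\, K^s_z\rangle_s},\qquad z\in\mathbb C.
$$
Once this is in hand, linearity in the pair $(u,v)$ gives
$$
\mathcal B^s\bigl[[T^s_{\bar u},T^s_{\bar v}]\bigr] = -\overline{\,\mathcal B^s\bigl[[T^s_u,T^s_v]\bigr]\,}.
$$
Since $\mathscr L_{\rm fos}(\mathcal F^s_\infty)$ is an algebra containing $T^s_w$ for each $w\in\mathcal S^s$ (Proposition \ref{welldefined}), both commutators lie in $\mathscr L_{\rm fos}(\mathcal F^s_\infty)$, and the hypothesis $[T^s_u,T^s_v]=0$ forces the right-hand side above to vanish identically. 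Lemma \ref{1-1} then yields $[T^s_{\bar u},T^s_{\bar v}]=0$.

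To establish the reciprocity identity, I would unfold both sides as double integrals. For the left-hand side, using $T^s_u\psi=P^s(u\psi)$ together with $P^sK^s_z=K^s_z$,
$$
\langle T^s_u T^s_v K^s_z,K^s_z\rangle_s = \int_{\mathbb C} u(w)\,(T^s_v K^s_z)(w)\,\overline{K^s_z(w)}\,G_s(w)\,dA(w),
$$
and expanding $T^s_v K^s_z(w)=\int v(\zeta)K^s_z(\zeta)K^s(w,\zeta)G_s(\zeta)\,dA(\zeta)$ by the reproducing property yields
$$
\iint u(w)v(\zeta)K^s_z(\zeta)K^s(w,\zeta)\overline{K^s_z(w)}\,G_s(w)G_s(\zeta)\,dA(\zeta)\,dA(w).
$$
Treating the right-hand side analogously and invoking the Hermitian symmetry $K^s(\zeta,w)=\overline{K^s(w,\zeta)}$, one arrives at precisely the same double integral after renaming the dummy variables. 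The identity then follows provided the swap of the order of integration is legitimate.

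The main technical obstacle is this Fubini justification. For $u,v\in\mathcal S^s$ one has, for every $\epsilon>0$, bounds $|u(w)|\le C_\epsilon(1+|w|)^{-s}e^{\epsilon|w|^2}$ and similarly for $v$. Combining these with the kernel growth \eqref{growth}, the pointwise estimate $|K^s_z(\zeta)|\le C_z\, e^{|\zeta|^2/2}/(1+|\zeta|)^s$ (obtained by applying the reproducing inequality cited at the start of Section \ref{pre} to $f=K^s_z$ and using \eqref{norm}), and the Gaussian factor $G_s(w)G_s(\zeta)$, the absolute value of the integrand is dominated, up to polynomial factors and a $z$-dependent constant, by
$$
\exp\!\Bigl[(\epsilon-\tfrac12)|w|^2 + (\epsilon-\tfrac12)|\zeta|^2 + a|w||\zeta|\Bigr]
$$
for any $0<a<1$ chosen in \eqref{growth}. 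Taking $\epsilon$ small enough that $a<1-2\epsilon$ makes this quadratic form in $(|w|,|\zeta|)$ negative-definite, so the integrand lies in $L^1(dA\times dA)$ and Fubini applies. The remaining steps of the argument are then routine.
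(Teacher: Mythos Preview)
Your overall strategy---establish $\mathcal B^s[T^s_uT^s_v]=\overline{\mathcal B^s[T^s_{\bar v}T^s_{\bar u}]}$ and then invoke Lemma~\ref{1-1}---is exactly the paper's. The gap is in the Fubini justification. Estimate \eqref{growth} does \emph{not} bound $|K^s(w,\zeta)|$ by $C\,e^{a|w||\zeta|}/(1+|w||\zeta|)^s$ for some $a<1$; it bounds the kernel by a sum of two terms, the first of which is $e^{{\rm Re}(w\bar\zeta)}\chi_\delta(w\bar\zeta)$, and on the sector where $\chi_\delta=1$ this is essentially $e^{|w||\zeta|}$. Combining that with the coarse bounds $|K^s_z(\zeta)|\le C_z e^{|\zeta|^2/2}$ and $|K^s_z(w)|\le C_z e^{|w|^2/2}$ you use for the other two kernel factors, the exponent of your majorant is really
\[
(\epsilon-\tfrac12)|w|^2 + (\epsilon-\tfrac12)|\zeta|^2 + |w||\zeta|,
\]
and this quadratic form in $(|w|,|\zeta|)$ has eigenvalues $\epsilon$ and $\epsilon-1$. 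For any $\epsilon>0$ it is indefinite (along $|w|=|\zeta|=r$ it equals $2\epsilon r^2$), so the majorant is not in $L^1(dA\times dA)$ and Fubini is not justified as stated.

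The repair is easy: use \eqref{growth} for \emph{all three} kernel factors, so that $|K^s_z(\zeta)|\lesssim e^{|z||\zeta|}$ contributes only a term linear in $|\zeta|$ to the exponent; the Gaussian weights then give coefficients $\epsilon-1$ instead of $\epsilon-\tfrac12$, and the quadratic part has eigenvalues $\epsilon-\tfrac12$ and $\epsilon-\tfrac32$, both negative for $\epsilon<\tfrac12$. The paper sidesteps the issue altogether by never unfolding to a double integral: it moves the symbols across the inner product one step at a time, $\langle u\,T^s_vK^s_z, K^s_z\rangle_s=\langle T^s_vK^s_z, \bar uK^s_z\rangle_s=\langle T^s_vK^s_z, T^s_{\bar u}K^s_z\rangle_s=\cdots$, observing that every function appearing lies in $\mathcal D^s\subset L^2(G_sdA)$, so each step is an elementary $L^2$ identity.
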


\begin{proof}
For the Berezin transform of $T^s_u T^s_v$ at an arbitrary point $z\in\mathbb C$, we have
\begin{align*}
\mathcal B^s[T^s_u T^s_v](z)
&=\langle u T^s_v(k^s_z), k^s_z \rangle_s\\
&=\langle T^s_v(k^s_z), \overline u k^s_z \rangle_s\\
&=\langle T^s_v(k^s_z), T^s_{\overline u} (k^s_z) \rangle_s\\
&=\langle v k^s_z, T^s_{\overline u} (k^s_z) \rangle_s\\
&=\langle k^s_z, \overline v T^s_{\overline u} (k^s_z) \rangle_s\\
&=\overline{\mathcal B^s[T^s_{\bar v}T^s_{\bar u}](z)};
\end{align*}
the equalities above are easily verified, because all the functions inside the inner products belong to $\mathcal D^s\subset L^2(G_s dA)$.
Thus we have $\mathcal B^s[T^s_u T^s_v] = \overline{\mathcal B^s[T^s_{\bar v}T^s_{\bar u}]}$.
By interchanging the roles of $u$ and $v$, we also obtain
$\mathcal B^s[T^s_v T^s_u]=\overline{\mathcal B^s [T^s_{\bar u} T^s_{\bar v}]}$.
So, assuming $[T^s_u, T^s_v]=0$, we see that the $s$-Berezin transform of $[T^s_{\bar u}, T^s_{\bar v}]$
is identically zero. Thus we conclude $[T^s_{\bar u}, T^s_{\bar v}]=0$ by Lemma [\ref{1-1}].
\end{proof}

Now, we are ready to conclude the proof of Theorem \ref{main}.

\begin{proof}[Proof of Theorem \ref{main}]
Assume $[T_u^s, T_v^s]=0$. To derive a contradiction suppose that $v$ is not radial.
Then there is some integer
$j\ne 0$ such that $v_j$ is nontrivial. We may assume that such $j$ is positive, because $[T_{\overline u}^s, T_{\overline v}^s]=0$
by Lemma \ref{adjoint}. Put
\begin{align*}
\Phi_{s,j}(z)= \Phi_j(z+s), \quad\Psi_{s,j}(z)= \Psi_j(z+s)\\
\intertext{so that}
Q_{s,j}(z):=\Phi_{s,j}(z)\Psi_{s,j}(z)\Gamma(z+s+1)
\end{align*}
for ${\rm Re}(z)>-s-1$.

Using Lemma \ref{inverse}, pick a function $f\in\mathcal A$ such that \eqref{qsj} holds. We then have by \eqref{mainequ}
\begin{align*}
Q_{s,j}(k)= \int_0^{\infty}f(x)e^{-x}x^{2k-1}\,dx=0
\end{align*}
for any integer $k\ge 0$. So, we see by Proposition \ref{lemBL}(a) (with $a=2$) that $f=0$ a.e. on $\mathbb R^+$
 and hence that $Q_{s,j}$ vanishes everywhere on the half-plane ${\rm Re}(z)>-s-1$.

Note that the holomorphic function $\Psi_{s,j}$ is not identically zero on a right half-plane, because the Mellin transform is one-to-one.
Since the gamma function is non-vanishing, we see that $\Phi_{s,j}$ vanishes everywhere on the half-plane ${\rm Re}(z)>-s-1$. So, setting
$$
H(z):=\dfrac{\mathcal M[uG](2z+2)}{\Gamma (z+1)},
$$
we see that
$$
H(z+s)=H(z+s+j),\qquad {\rm Re}(z)>-s-1,
$$
or said differently,
$$
H(z)=H(z+j),\qquad {\rm Re}(z)>-1.
$$
This shows that the function $H$ extends to an entire function with period $j$. Finally, we conclude
by Proposition \ref{lemBL}(b) that $u$ is constant a.e. on $\mathbb R^+$, which is a contradiction. The proof is complete.
\end{proof}

\end{document}